\documentclass[11pt,letterpaper]{amsart}

\usepackage{fancyhdr}
\pagestyle{fancy}
\fancyhf{}
\fancyhead[C]{%
\ifodd\value{page}\relax
{\hspace{90 pt}Singularity of divisors}
\else
\small{\hspace{-90 pt} Sarkar S.}
\fi}
\fancyfoot[C]{\thepage}

\usepackage{bm}
\setlength{\headheight}{14.49998pt}
\usepackage{hyperref}
\usepackage{graphicx} 
\usepackage{nicematrix}
\usepackage{amsthm,amssymb,amsmath}
\usepackage[T1]{fontenc}
\usepackage[utf8]{inputenc}
\usepackage{hyperref}
\usepackage{lipsum}
\usepackage{mathrsfs}
\usepackage{enumitem}
\usepackage{stmaryrd}
\usepackage{setspace}
\usepackage{yfonts}
\usepackage{float}
\usepackage{mathtools}
\usepackage{parskip}
\usepackage[all,cmtip]{xy}
\usepackage{tikz-cd}
\tikzcdset{row sep/normal=50pt, column sep/normal=50pt}
\newcommand{\floor}[1]{\lfloor #1 \rfloor}
\newtheorem{lemma}{Lemma}[section]

\newtheorem{theorem*}{Theorem}

\newtheorem{example*}[lemma]{Example}

\newtheorem{corollary}[lemma]{Corollary}

\newenvironment{manualtheorem}[1]{%
  \manualtheoreminner
}{\endmanualtheoreminner}
\usepackage{blindtext}

\usepackage[backend=biber, style=numeric, sorting=nyt]{biblatex}
\addbibresource{name.bib}
\title{Singularity of $\mathbb{Q}$-divisors of multidegree one in multiprojective space}
\author{\small{Supravat Sarkar}}
\date{}
\begin{document}

\begin{abstract}
    We study singularity of effective $\mathbb{Q}$-divisors on products of projective spaces of multidegree $(1,1...,1).$ This generalizes works of Bath, Musta{\c{t}}{\u{a}} and Walther on singularity of square-free polynomials. We also give a lower bound on the log canonical threshold of a hypersurface in products of projective spaces.
\end{abstract}
\maketitle
\begin{center}
\textbf{Keywords}: rational singularity, log canonical threshold
\end{center}
\begin{center}
\textbf{MSC Number: 14B05, 14J17, 32S25} 
\end{center}

\section{Introduction}
 We work throughout over the field of complex numbers. A \textit{variety} is an integral finite type separated scheme over $\mathbb{C}$. Study of singularity of varieties is a ubiquitous topic in algebraic geometry. In this note, we are mainly concerned about the measure of singularities relevant to birational geometry and minimal model program. In birational geometry, one considers singularity of a pair $(X,\Delta)$, where $X$ is a variety, $\Delta$ is an effective $\mathbb{Q}$-divisor. There one classifies singularities in several classes like terminal, canonical, klt, plt, lc (see \cite{wilson2000birational}). By inversion of adjunction ({\cite[Theorem 7.5]{kollar1997singularities}}), one gets information about the singularity of a divisor $D$ on $X$ from the singularity of the pair $(X,D)$, and vice versa.

 In this note, we study singularities of effective $\mathbb{Q}$-divisors in products of projective spaces of multidegree $(1,1,...,1).$ Note that the line bundle associated to any effective Cartier divisor in $X=\prod_{i=1}^n\mathbb{P}^{m_i}$ is $\boxtimes_i \mathcal{O}_{\mathbb{P}_{m_i}}(a_i)$ for some nonnegative integers $a_i$. We call the vector $(a_1, a_2,..., a_n)$ the multidegree of $D$. More generally, if $D=\sum_j c_j D_j$ is a $\mathbb{Q}$-divisor on $X$, we define the multidegree of $D$ to be $\sum_j c_j$ deg $D_j$, where deg $D_j$ is the multidegree of $D_j$. Our main theorem is the following.
 \begin{manualtheorem}{A}\label{main}
     Let $n\geq1$, $m_1, m_2,..., m_n$ be positive integers, $X=\prod_{i=1}^n\mathbb{P}^{m_i}$. Let $\Delta\geq0$ be a $\mathbb{Q}$-divisor in $X$ of multidegree $(1,1,...,1).$ Then
     \begin{enumerate}
         \item[(i)] If  $\floor{\Delta}=0$, then $(X,\Delta)$ is klt.
         \item[(ii)] If  $\floor{\Delta}$ is irreducible, then $(X,\Delta)$ is plt.
     \end{enumerate}
 \end{manualtheorem}

 As an application, we show in Corollary \ref{rat} that irreducible divisors of multidegree $(1,1,...,1)$ in products of projective spaces have rational singularities. From this one easily recovers the main result of \cite{bath2024singularities}, as we show in Corollary \ref{squarefree}. In Corollary \ref{lct}, we give a lower bound of the log canonical threshold of an irreducible hypersurface in a product of projective spaces of multidegree $(d,d,..,d)$ for some $d\geq 1.$ This generalizes the $K=1$ case of {\cite[Theorem 1.3, Theorem 6.1]{cheltsov2001log}}.
\section{ Proof of the Main Theorem}
 First we prove two lemmas.
\begin{lemma}\label{normal}
Let $n\geq1$, $m_1, m_2,..., m_n$ be positive integers, $D$ an irreducible divisor in $X=\prod_{i=1}^n\mathbb{P}^{m_i}$ of multidegree $(1,1,...,1)$. Then $D$ is normal.
\end{lemma}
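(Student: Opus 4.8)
The plan is to reduce to Serre's criterion and to induct on $n$. Since $D$ is an effective Cartier divisor on the smooth variety $X$, it is Cohen--Macaulay (a hypersurface in a smooth variety), hence satisfies $S_2$; so by Serre's criterion $D$ is normal if and only if it is regular in codimension one, that is, $\dim\operatorname{Sing}(D)\le\dim D-2=\dim X-3$. When $n=1$, $D$ is a hyperplane, hence smooth, and the bound is clear. For the inductive step I would write $X=X'\times\mathbb{P}^{m_n}$ with $X'=\prod_{i=1}^{n-1}\mathbb{P}^{m_i}$, let $z_0,\dots,z_{m_n}$ be homogeneous coordinates on the last factor, and write the defining multilinear form as $F=\sum_{k=0}^{m_n}z_kG_k$ with $G_k\in H^0(X',\mathcal{O}(1,\dots,1))$; for $[z]\in\mathbb{P}^{m_n}$ set $G_{[z]}=\sum_k z_kG_k$. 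Observe first that $r:=\dim\langle G_0,\dots,G_{m_n}\rangle\ge 2$: if this span were one-dimensional, then $F=\ell\cdot G$ with $\ell$ linear in the $z$'s, and $D=V(\ell)\cup V(G)$ would be reducible. After a linear change of coordinates on $\mathbb{P}^{m_n}$ we may assume $G_0,\dots,G_{r-1}$ are linearly independent and $G_r=\dots=G_{m_n}=0$.

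Next, apply the multihomogeneous Jacobian criterion: a point $(y,[z])$ lies in $\operatorname{Sing}(D)$ precisely when $G_k(y)=0$ for all $k$ (the derivatives in the $z$-directions) and all partial derivatives of $G_{[z]}$ in the coordinate directions of $X'$ vanish at $y$. Hence $\operatorname{Sing}(D)\subseteq Z\times\mathbb{P}^{m_n}$, where $Z=V(G_0,\dots,G_{m_n})\subseteq X'$, and I split $\operatorname{Sing}(D)$ according to the linear subspace $\Lambda=\{z_0=\dots=z_{r-1}=0\}$ (of dimension $m_n-r$), on which $G_{[z]}\equiv 0$. Over $\Lambda$ the singular locus is $Z\times\Lambda$; since $Z\subseteq V(G_0)\subsetneq X'$ and $r\ge 2$, this has dimension at most $(\dim X'-1)+(m_n-r)\le\dim X-3$. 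Over $[z]\notin\Lambda$ the form $G_{[z]}$ is a nonzero section of $\mathcal{O}(1,\dots,1)$ on $X'$ (automatically square-free), and the conditions above force $y\in\operatorname{Sing}(V(G_{[z]}))$; for such $[z]$ one always has $\dim\operatorname{Sing}(V(G_{[z]}))\le\dim X'-2$ because $V(G_{[z]})$ is a reduced hypersurface, and if moreover $V(G_{[z]})$ is irreducible then the inductive hypothesis improves this to $\dim X'-3$. Consequently the contribution from $[z]\notin\Lambda$ with $V(G_{[z]})$ irreducible has dimension at most $(\dim X'-3)+m_n=\dim X-3$.

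It remains to bound the ``bad'' parameters. Let $R\subseteq\mathbb{P}^{m_n}$ be the closed set of $[z]$ for which $G_{[z]}$ is zero or reducible; the part of $\operatorname{Sing}(D)$ lying over $R\setminus\Lambda$ has dimension at most $(\dim X'-2)+\dim R$, so it is enough to prove $R\neq\mathbb{P}^{m_n}$, i.e. $\dim R\le m_n-1$ --- and this is the crux. A multidegree $(1,\dots,1)$ form is reducible exactly when it is $\boxtimes$-decomposable with respect to some partition of $\{1,\dots,n-1\}$; for each such partition $P$ the decomposable-or-zero forms constitute a closed cone $W_P$, namely the affine cone over a Segre variety under the Künneth decomposition of $H^0(X',\mathcal{O}(1,\dots,1))$. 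If $R=\mathbb{P}^{m_n}$, then $\mathbb{P}^{m_n}$ is covered by the finitely many closed sets $\{[z]:G_{[z]}\in W_P\}$, so by irreducibility of $\mathbb{P}^{m_n}$ a single $W_P$ contains every $G_{[z]}$, hence contains $V:=\langle G_0,\dots,G_{m_n}\rangle$. Thus $\mathbb{P}(V)$ is a linear subspace of a Segre variety, and by the classical description of such subspaces $V$ lies either in $(\text{a fixed line})\otimes H^0(\cdot)$ or in $H^0(\cdot)\otimes(\text{a fixed line})$. In either case $F$ itself becomes $\boxtimes$-decomposable (for instance $F=A_0\boxtimes\bigl(\sum_k z_kB_k\bigr)$ when every $G_k=A_0\boxtimes B_k$), forcing $D$ reducible --- a contradiction. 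Hence $\dim R\le m_n-1$, and assembling the three dimension estimates gives $\dim\operatorname{Sing}(D)\le\dim X-3$, so $D$ is normal. The main obstacle is precisely this last step: using irreducibility of $D$ to control the locus of hyperplanes $[z]$ for which the slice $V(G_{[z]})$ is reducible, equivalently, showing that a linear system consisting entirely of reducible multilinear forms forces the enveloping form $F$ to factor.
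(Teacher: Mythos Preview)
Your argument is correct, but it takes a genuinely different route from the paper's. Both proofs begin identically---$D$ is Cohen--Macaulay as a hypersurface in a smooth variety, so by Serre's criterion one only needs $R_1$---but then diverge. The paper uses \emph{all} $n$ projections $\pi_i:X\to\prod_{j\ne i}\mathbb{P}^{m_j}$ at once: outside the locus $W_i=\{x:\pi_i^{-1}(x)\subseteq D\}$ the restriction $\pi_i|_D$ is a $\mathbb{P}^{m_i-1}$-bundle, hence $D$ is smooth there, so $\operatorname{Sing}(D)\subseteq\pi_i^{-1}(W_i)$ for every $i$; a hypothetical codimension-one component $Z$ of $\operatorname{Sing}(D)$ would then be an irreducible component of each $\pi_i^{-1}(W_i)$, hence satisfy $Z=\pi_i^{-1}(\pi_i(Z))$ for all $i$, forcing $Z=X$. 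This is a two-line contradiction with no induction and no auxiliary algebraic facts. Your proof instead projects only to the last factor $\mathbb{P}^{m_n}$, inducts on $n$, and stratifies the base according to whether the slice $G_{[z]}$ is zero, reducible, or irreducible; the substantive step---ruling out that every slice is reducible---you handle via the classification of linear subspaces of a two-factor Segre variety, which does the job but is heavier machinery than the paper needs. The trade-off: the paper's symmetric approach is shorter and more conceptual, while yours is more explicit about how irreducibility of $D$ propagates to generic slices, and the Segre-subspace lemma you invoke is of independent interest.
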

\begin{proof}
    For $n=1$, it is clear. So assume $n\geq 2.$ The argument is essentially the same as in {\cite[Claim 3.5, Theorem 1.1]{bath2024singularities}}. For $1\leq i\leq n$, let $\pi_i:X\to \prod_{j\neq i}\mathbb{P}^{m_i}$ be the projection. Let $W_i$ be the closed subset of points $x\in \prod_{j\neq i}\mathbb{P}^{m_i}$ such that $D\supset\pi_i^{-1}(x).$ Since $D$ has multidegree $(1,1,...,1)$, $\pi_i|_D$ is a $\mathbb{P}^{m_i-1}$ bundle outside $W_i$, hence $D$ is smooth and nonempty over the complement of $W_i$. Also, $\pi_i^{-1}(W_i)$ is a proper closed subset of $D$, so $\dim \pi_i^{-1}(W_i)\leq \sum_i m_i-2$.

    Since $D$ is a hypersurface in the smooth variety $X$, it is Cohen-Macaulay. By Serre's criterion of normality, it suffices to show that $D$ is regular in codimension $1$. Suppose not, let $Z$ be  an irreducible component of the singular locus of $D$ of dimension $\sum_i m_i-2.$ Since $D$ is smooth over the complement of $W_i$, we must have $Z\subset \pi_i^{-1}(W_i)$ for all $i$. But dim $Z=n-2\geq $ dim $\pi_i^{-1}(W_i)$, so $Z$ is an irreducible component of $\pi_i^{-1}(W_i)$. This means $Z=\pi_i^{-1}(\pi_i(Z))$ for all $i$. But this implies $Z=X$, a contradiction.

\end{proof}
\begin{lemma}\label{10c}
    Let $n\geq1$, $m_1, m_2,..., m_n$ be positive integers, $D$ an effective Cartier divisor in $X=\prod_{i=1}^n\mathbb{P}^{m_i}$ of multidegree a vector of $1$'s and $0$'s. Then the following holds:
    \begin{enumerate}
        \item $D$ is connected.
        \item If $D$ is irreducible, then $D$ is normal.
    \end{enumerate}
\end{lemma}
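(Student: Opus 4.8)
The plan is to reduce both statements to the all-ones case treated in Lemma \ref{normal}, exploiting that a divisor whose multidegree has some zero entries is pulled back from a smaller product. Write the multidegree as $(a_1,\dots,a_n)$ with each $a_i\in\{0,1\}$, set $S=\{i:a_i=1\}$, and split $X=Y\times Z$ with $Y=\prod_{i\in S}\mathbb{P}^{m_i}$ and $Z=\prod_{i\notin S}\mathbb{P}^{m_i}$; let $p\colon X\to Y$ be the projection. We may assume $D\neq 0$, so that $S\neq\emptyset$ (if every $a_i=0$ then $D$ is empty). Since $H^{0}(X,\mathcal{O}_X(D))=\bigotimes_i H^{0}(\mathbb{P}^{m_i},\mathcal{O}(a_i))$ and the factors with $i\notin S$ are just $\mathbb{C}\cdot 1$, the section defining $D$ is the pullback $p^{*}s_0$ of a section $s_0$ on $Y$; hence $D=p^{-1}(D_0)=D_0\times Z$ as schemes, where $D_0\subseteq Y$ has multidegree $(1,\dots,1)$. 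Note also that $D$ is automatically reduced: a component occurring with multiplicity $\geq 2$ would be a prime divisor of multidegree zero, which is impossible.

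For part (1) I would run the standard connectedness-of-an-ample-divisor argument on $Y$. By the K\"unneth formula and $H^{\bullet}(\mathbb{P}^{m},\mathcal{O}(-1))=0$ for all $m\geq 1$, we get $H^{j}(Y,\mathcal{O}_Y(-1,\dots,-1))=0$ for $j=0,1$; the cohomology sequence of $0\to\mathcal{O}_Y(-1,\dots,-1)\to\mathcal{O}_Y\to\mathcal{O}_{D_0}\to 0$ then gives $H^{0}(D_0,\mathcal{O}_{D_0})=\mathbb{C}$, so $D_0$ is connected. Hence $H^{0}(D,\mathcal{O}_D)=H^{0}(D_0,\mathcal{O}_{D_0})\otimes H^{0}(Z,\mathcal{O}_Z)=\mathbb{C}$ and $D$ is connected. (The same exact sequence run directly on $X$ with $\mathcal{O}_X(-D)$ also works, but the splitting is needed for part (2).)

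For part (2), since $D=D_0\times Z$ is integral and $Z$ is a variety, $D_0$ is integral as well (for instance because $D\to D_0$ is faithfully flat); thus $D_0$ is an irreducible divisor of multidegree $(1,\dots,1)$ in the product of projective spaces $Y$, and Lemma \ref{normal} gives that $D_0$ is normal. Finally $D=D_0\times Z\to D_0$ is smooth (it is the base change of $Z\to\operatorname{Spec}\mathbb{C}$, and $Z$ is smooth), and a scheme smooth over a normal scheme is normal; hence $D$ is normal.

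There is no serious obstacle here: the substantive step is the reduction $D=D_0\times Z$ together with the check that $D_0$ inherits irreducibility (and reducedness) from $D$; granting that, (1) is the usual cohomological connectedness statement and (2) is Lemma \ref{normal} plus ascent of normality along smooth morphisms. The only points needing care are the degenerate cases (empty $D$, or $\dim Y$ small), which are either excluded or trivial.
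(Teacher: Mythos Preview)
Your proof is correct and follows essentially the same route as the paper: split $X=Y\times Z$ along the support of the multidegree, write $D=D_0\times Z$ with $D_0$ of multidegree $(1,\dots,1)$ on $Y$, then use connectedness of $D_0$ for (1) and Lemma~\ref{normal} for (2). The only difference is cosmetic: the paper deduces connectedness of $D_0$ in one line from its ampleness, whereas you unpack this via the K\"unneth computation of $H^{\leq 1}(Y,\mathcal{O}_Y(-1,\dots,-1))$.
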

\begin{proof}
   Without loss of generality, assume that the multidegree of $D$ is $(\underline{1}, \underline{0})$ where $\underline{1}$ is a vector of length $r$, $\underline{0}$ is a vector of length $n-r$. Let $f:X\to\prod_{i\leq r}\mathbb{P}^{m_i}$ be the projection. So, $D=f^{-1}(D_1)\cong D_1\times \prod_{r<i\leq n} \mathbb{P}^{m_i}$, where $D_1$ is an effective Cartier divisor in $\prod_{i\leq r}\mathbb{P}^{m_i}$ of multidegree $(1,1,...,1)$, hence ample. So $D_1$, hence $D$ is connected, proving $(1)$. If $D$ is irreducible, $D_1$ is also irreducible. So, $D_1$ is normal by Lemma \ref{normal}. So, $D$ is normal, proving $(2).$

\end{proof}
Now we can prove Theorem \ref{main}.

\textit{Proof of Theorem \ref{main}:}
  If $n=1$, the $(i)$ follows from {\cite[Theorem 1.3, Theorem 6.1]{cheltsov2001log}}, and $(ii)$ is clear. So, we assume $n\geq 2$. Throughout the proof, for a log pair $(W,D)$, we denote the non-klt locus of $(W,D)$ by $Nklt(W,D).$
 
     \textbf{Step 1:}  First we prove the theorem assuming that every non-klt centre of $(X,\Delta)$ has dimension $0$ or $\sum_i m_i-1$. Let $\pi:X\to\prod_{i\leq n-1}\mathbb{P}^{m_i}$ the projection. By permuting the $m_i$'s if necessary, we can assume that $\pi(\floor{\Delta})$ is not a point if $\floor{\Delta}$ is irreducible. 
     
     It suffices to show there is no non-klt centre of dimension $0$. Suppose not, let $p=(q,q')\in X$ be a non-klt centre of dimension $0$, where $q\in \prod_{i\leq n-1}\mathbb{P}^{m_i}$, $q'\in \mathbb{P}^{m_n}$. Let $H$ be a general divisor containing $q$ in $\prod_{i\leq n-1}\mathbb{P}^{m_i}$ of multidegree $(1,...,1)$.

     Let $E$ be a prime divisor over $X$ with centre $p$ such that $a(E, X, \Delta)\leq -1$. Since every non-klt centre of $(X,\Delta)$ has dimension $0$ or $\sum_i m_i-1$, there is a neighborhood $U$ of $p$ such that $(U\setminus p, \Delta|_{U\setminus p})$ is plt. Since $H$ is general, $\pi^{-1}H$ and $\floor{\Delta}$ has no common irreducible components. By {\cite[Corollary 2.35]{wilson2000birational}}, $(U\setminus p, (\Delta+\epsilon \pi^{-1}H)|_{U\setminus p})$ is plt for $0<\epsilon<<1.$ Since $p\in \pi^{-1}H$, we have $a(E, X,\Delta+\epsilon \pi^{-1}H)<-1.$ So, for $0<\eta<<\epsilon,$ we have $a(E, X,(1-\eta)\Delta+\epsilon \pi^{-1}H)<-1.$ Also, since $\floor{(1-\eta)\Delta+\epsilon \pi^{-1}H}=0$, $(U\setminus p, ((1-\eta)\Delta+\epsilon \pi^{-1}H)|_{U\setminus p})$ is klt. So, $Nklt(X,(1-\eta)\Delta+\epsilon \pi^{-1}H)\cap U=\{p\}.$

     Let $H'$ be a hyperplane in $\mathbb{P}^{m_n}$ disjoint from $q'$, and let $H_1=\prod_{i\leq n-1}\mathbb{P}^{m_i}\times H'$, a prime divisor in $X$ disjoint from $p$. Let $\Delta'=H_1+(1-\eta)\Delta+\epsilon \pi^{-1}H. $ So, $H_1\cup \{p\}\subseteq Nklt(X,\Delta')$ and $p$ is an isolated point of $Nklt(X,\Delta')$. So, $Nklt(X,\Delta')$ is disconnected. But $-(K_X+\Delta')$ has multidegree $(m_1+\eta-\epsilon,...,m_{n-1}+\eta-\epsilon, m_n-1+\eta),$ a vector of positive numbers. So, $-(K_X+\Delta')$ is ample. This contradicts Koll{\'a}r-Shokurov connectedness principle {\cite[Theorem 17.4]{kollar1992flips}}.

     \textbf{Step 2:} Now we prove $(i)$ by induction of $\sum_i m_i$. If $\sum_i m_i=1$, $(i)$ is vacuously true, as we assumed $n\geq 2$. Now suppose $\sum_i m_i\geq 2$. Suppose $Nklt(X,\Delta)\neq \varnothing$. By Step 1, we must have dim $Nklt(X,\Delta)>0$. So, there is $i$ such that the image of $Nklt(X,\Delta)$ under the projection to the $i$'th factor $\mathbb{P}^{m_i}$ has positive dimension. Without loss of generality assume $i=n$. Let $\pi:X\to \mathbb{P}^{m_n}$ be the projection. Let $H_1$ be a general hyperplane in $\mathbb{P}^{m_n}$, and $H=\pi^{-1}H_1\cong \prod_{i\leq n-1}\mathbb{P}^{m_i}\times \mathbb{P}^{m_n-1}$, $\Delta_1=\Delta|_H.$ So, $\floor{\Delta_1}=0$, and $\Delta_1$ has multidegree $(1,1,..,1)$. If at most one number among $m_1,..m_{n-1}, m_n-1$ is nonzero, then $(H,\Delta_1)$ is klt as Theorem \ref{main} is true for $n=1.$ If at least two numbers among $m_1,..,m_{n-1}, m_n-1$ are nonzero, then $(H,\Delta_1)$ is klt by induction hypothesis. So, in any case, $(H,\Delta_1)$ is klt.

     By {\cite[Theorem 7.5]{kollar1997singularities}}, $(X, H+\Delta)$ is plt near $H$, so $(X,\Delta)$ is plt near $H$. Since $\floor{\Delta}=0$, $(X,\Delta)$ is klt near $H$. So, $Nklt(X,\Delta)\cap H=\varnothing$. This means $H_1\cap \pi(Nklt(X,\Delta))=\varnothing$. But this is impossible, as dim $\pi(Nklt(X,\Delta))>0.$ This contradiction shows $(X,\Delta)$ is klt. So, by induction, proof of $(i)$ is complete.

     \textbf{Step 3:} Now we prove $(ii)$ by induction of $\sum_i m_i$. If $\sum_i m_i=1$, $(ii)$ is vacuously true, as we assumed $n\geq 2$. Now suppose $\sum_i m_i\geq 2$. Write $\Delta=D_1+\Delta_1$, where $D_1$ is a prime divisor, $\floor{\Delta_1}=0,$ $D_1$ is not a component of $\Delta_1.$ If all non-klt centres of $(X,\Delta)$ has dimension $0$ or $\sum_i m_i-1,$ we are done by Step 1. So suppose there is a non-klt centre $Z$ of $(X,\Delta)$ such that $0<$ dim $Z<\sum_i m_i-1$. Without loss of generality, assume dim $\pi(Z)>0$, where $\pi:X\to \mathbb{P}^{m_n}$ is the projection. Note that $(X, \Delta_1)$ is klt. The reason is as follows: note that $|D_1|$ is base-point free, choose $D_2\in |D_1|$ general. Then $\floor{\Delta_1+\frac{1}{2}D_1+ \frac{1}{2}D_2}=0,$ $\Delta_1+\frac{1}{2}D_1+ \frac{1}{2}D_2$ has multidegree $(1,1,..1).$ By $(i)$, $(X, \Delta_1+\frac{1}{2}D_1+ \frac{1}{2}D_2)$ is klt, so $(X, \Delta_1)$ is klt. 
     
     This implies $Z\subset D_1.$ In particular, dim $\pi(D_1)>0$. 

     Now let $H_1$ be a general hyperplane in $\mathbb{P}^{m_n}$, $H=\pi^{-1}(H_1)\cong \prod_{i\leq n-1}\mathbb{P}^{m_i}\times \mathbb{P}^{m_n-1}, \Delta'=\Delta|_H.$ So, $\Delta'$ has multidegree $(1,1,..,1)$, and $\floor{\Delta'}=D_1\cap H$.

     \textit{Claim:} $D_1\cap H$ is irreducible.
     \begin{proof}
     If $m_n\geq 2$, then the claim follows by {\cite[Theorem 2.3]{das2019log}}. So suppose $m_n=1$. The multidegree of $D_1$ is a vector of $1$'s and $0$'s, so by Corollary \ref{10c}(2), $D_1$ is normal. For each $x\in \mathbb{P}^{1}$, $D_1\cap \pi^{-1}(x)$ is either $\prod_{i\leq n-1}\mathbb{P}^{m_i}$, or an effective Cartier divisor in $\prod_{i\leq n-1}\mathbb{P}^{m_i}$ of multidegree a vector of $1$'s and $0$'s, hence is connected by Lemma \ref{10c} (1). So, $\pi|_{D_1}:D_1\to \mathbb{P}^{1}$ is a contraction of normal varieties, so the general fibre of $\pi|_{D_1}$ is irreducible. So, $D_1\cap H$ is irreducible.
     \end{proof}

       If at most one number among $m_1,..,m_{n-1}, m_n-1$ is nonzero, then $(H,\Delta')$ is plt as Theorem \ref{main} is true for $n=1.$ If at least two numbers among $m_1,..m_{n-1}, m_n-1$ are nonzero, then $(H,\Delta')$ is plt by induction hypothesis. So, in any case, $(H,\Delta')$ is plt. Note that dim $(Z\cap H)=$ dim $Z-1<$ dim $D_1-1=$ dim $(D_1\cap H)$.

       Since dim $\pi(Z)>0$, we have $Z\cap H\neq \varnothing$. Let $W$ be an irreducible component of $Z\cap H.$ Now the following claim contradicts the fact that $(H, \Delta')$ is plt, hence completes the proof of $(ii)$ by induction.

     \textit{Claim:} $W$ is a non-klt centre of $(H, \Delta').$
     \begin{proof}
     Let $Y\xrightarrow{f} X$ be a birational map, $Y$ smooth projective, $E$ an $f$-exceptional divisor in $Y$ such that $a=a(E,X,\Delta)\leq -1$, and $f(E)=Z$. Let $p=\pi\circ f:Y\to \mathbb{P}^{m_n}$. Let $Y_1=p^{-1}(H_1)=f^{-1}(H).$ By Bertini, $Y_1$ is smooth. If $m_n\geq 2,$ then by {\cite[Theorem 2.3]{das2019log}}, $Y_1$ is irreducible. If $m_n=1$, then $p$ is a contraction of normal varieties, so by $Y_1$, being a general fibre of $p$, is irreducible. So, in any case, $Y_1$ is a smooth projective variety, and $f_1=f|_{Y_1}: Y_1\to H$ is birational.

     Now the proof is along the same lines as the proof of inversion of adjunction. We can write $$K_Y+ f_*^{-1}(\Delta)\sim f^*(K_X + \Delta) + aE +G,$$ where $G$ is $f$-exceptional and $E$ is not a component of $G$. Since $Y_1=f^{-1}(H)$, we have $$K_Y+ Y_1+ f_*^{-1}(\Delta)\sim f^*(K_X + H + \Delta) + aE +G.$$ Restricting to $Y_1$ and using adjunction, we get $$K_{Y_1}+ f_*^{-1}(\Delta')\sim f_1^*(K_H + \Delta') + aE_1 +G_1,$$ where $E_1= E\cap Y_1$, $G_1$ is $f_1$-exceptional divisor in $Y_1$. Note that $E_1$ and $G_1$ has no common irreducible components, as $H_1$ is general. Note that $f_1(E_1)=Z\cap H.$ If $E_2$ is an irreducible component of $E_1$ dominating $W$, then we have $a(E_2, H, \Delta')=a\leq -1$. So, $W$ is a non-klt centre of $(H, \Delta').$
     \end{proof}
 
\begin{corollary}\label{rat}
    Let $n\geq1$, $m_1, m_2,..., m_n$ be positive integers, $X=\prod_{i=1}^n\mathbb{P}^{m_i}$. Let $D$ be an irreducible divisor in $X$ of multidegree $(1,1,...,1).$ Then $D$ has rational singularities.
\end{corollary}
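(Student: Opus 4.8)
The plan is to derive the statement directly from Theorem \ref{main}(ii), together with adjunction and the fact that log terminal singularities are rational. First I would apply Theorem \ref{main}(ii) to the $\mathbb{Q}$-divisor $\Delta := D$: it has multidegree $(1,1,\dots,1)$ and $\floor{\Delta} = D$, which is irreducible by hypothesis, so the pair $(X,D)$ is plt.

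Next, by Lemma \ref{normal} the divisor $D$ is normal, so adjunction applies. Writing $\mathrm{Diff}_D(0)$ for the different of $D$ in $X$ — an effective $\mathbb{Q}$-divisor on $D$, since $X$ is smooth and so $K_X+D$ is Cartier — the fact that $(X,D)$ is plt yields that the pair $(D,\mathrm{Diff}_D(0))$ is klt, by adjunction (cf.\ {\cite[Theorem 7.5]{kollar1997singularities}}).

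Finally I would invoke the theorem that the underlying variety of a klt pair has rational singularities (due to Elkik; this is by now standard, see e.g.\ {\cite{kollar1997singularities}}). Applied to $(D,\mathrm{Diff}_D(0))$, this gives that $D$ has rational singularities, completing the proof.

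I do not anticipate a genuine obstacle: each of the three steps is either an application of a result already established in this note (Theorem \ref{main}, Lemma \ref{normal}) or a citation of a standard fact. The two points worth flagging are that normality of $D$ — precisely the content of Lemma \ref{normal} — is exactly what makes the adjunction step meaningful, and that the input invoked at the last step must be the version of ``klt implies rational'' for the underlying variety of a klt \emph{pair}, not merely for klt varieties carrying the trivial boundary. All the real substance is contained in Theorem \ref{main}.
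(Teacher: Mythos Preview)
Your proposal is correct and follows essentially the same route as the paper: normality of $D$ from Lemma \ref{normal}, plt-ness of $(X,D)$ from Theorem \ref{main}(ii), then inversion of adjunction to conclude $D$ is klt, and finally the standard fact that klt implies rational singularities. The only cosmetic difference is that you phrase the adjunction step via the different $\mathrm{Diff}_D(0)$ (which is in fact zero here, since $X$ is smooth and $D$ Cartier), whereas the paper simply says ``$D$ is klt''.
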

\begin{proof}
    By Theorem \ref{main} and \ref{10c}, $D$ is normal and $(X, D)$ is plt. So, by inversion of adjunction ({\cite[Theorem 7.5]{kollar1997singularities}}), $D$ is klt. So, $D$  has rational singularities by {\cite[Theorem 5.22]{wilson2000birational}}.
\end{proof}

Now we recover {\cite[Theorem 1.1]{bath2024singularities}}. For a positive integer $n$, let $[n]=\{1,2,...,n\}$. For a subset $I\subset [n]$, let $\underline{X}^I$ be the monomial $\prod_{i\in I} X_i$ in $\mathbb{C}[X_1,...,X_n].$ We say a polynomial $f\in \mathbb{C}[X_1,...,X_n]$ is \textit{square-free} if $f$ can be written as $\sum_{I\subset[n]} c_I \underline{X}^I$ for some $c_I\in \mathbb{C}$.
 \begin{corollary}\label{squarefree}
 Let $n\geq 1$, $Z$ a hypersurface in $\mathbb{A}^n$ defined by an irreducible square-free polynomial $f\in \mathbb{C}[X_1,...,X_n].$ Then $Z$ is normal and has rational singularities.
 \end{corollary}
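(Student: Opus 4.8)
The plan is to deduce this from Corollary~\ref{rat} by passing to the multiprojective compactification $X=(\mathbb{P}^1)^n\supseteq\mathbb{A}^n$. First I would make a harmless reduction: normality and the property of having rational singularities are both local, and are preserved in both directions under taking a product with affine space (equivalently, under smooth base change). Hence, letting $T\subseteq[n]$ be the set of variables actually occurring in $f$ (so $T\neq\varnothing$, since $f$ is non-constant), and writing $\mathbb{A}^n=\mathbb{A}^T\times\mathbb{A}^{[n]\setminus T}$, we have $Z\cong(V(f)\cap\mathbb{A}^T)\times\mathbb{A}^{[n]\setminus T}$; and $f$ remains irreducible and square-free in $\mathbb{C}[X_i:i\in T]$. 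So we may assume $T=[n]$, i.e.\ that $f$ involves every variable $X_1,\dots,X_n$.

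Next I would homogenize. Put homogeneous coordinates $[Y_i:Z_i]$ on the $i$-th factor of $X$, so that $\mathbb{A}^n$ is the affine chart $\{Z_1\cdots Z_n\neq 0\}$ with $X_i=Y_i/Z_i$. Writing $f=\sum_{I\subseteq[n]}c_I\underline{X}^I$, set $F=\sum_{I\subseteq[n]}c_I\prod_{i\in I}Y_i\prod_{j\notin I}Z_j$. Because $f$ is square-free, $F$ is multihomogeneous of multidegree $(1,1,\dots,1)$, and dehomogenizing $F$ on the chart $\mathbb{A}^n$ recovers $f$, so the divisor $D:=V(F)\subseteq X$ satisfies $D\cap\mathbb{A}^n=Z$, a nonempty open subscheme of $D$. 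The key point — and the step I expect to require the most care — is that $D$ is irreducible, i.e.\ that $F$ is an irreducible element of $\mathbb{C}[Y_1,Z_1,\dots,Y_n,Z_n]$. Suppose $F=GH$ with $G,H$ multihomogeneous non-units. Since the multidegrees of $G$ and $H$ add up to $(1,\dots,1)$, for each $i$ exactly one of the two has $i$-th entry equal to $1$; let $S$ be the set of $i$ for which it is $G$. Then $S$ is a proper nonempty subset of $[n]$, $G$ involves only the variables $Y_i,Z_i$ with $i\in S$, and $H$ involves only those with $i\notin S$. Dehomogenizing $F=GH$ then exhibits $f$ as a product $g(X_i:i\in S)\cdot h(X_j:j\notin S)$ of nonzero polynomials in disjoint sets of variables; irreducibility of $f$ forces one factor, say $h$, to be a nonzero constant, whence $f$ does not involve the variables $X_j$ with $j\notin S$ — contradicting $T=[n]$, since $[n]\setminus S\neq\varnothing$. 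Thus $F$ is irreducible and $D$ is an irreducible divisor of multidegree $(1,\dots,1)$ in $X$.

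Finally, Corollary~\ref{rat} applies to $D$: it is normal and has rational singularities. Since $Z$ is a nonempty open subscheme of $D$ and both properties are local, $Z$ is normal and has rational singularities, as claimed. The only real content beyond bookkeeping is the irreducibility of the compactification $D$, where the square-free hypothesis enters twice: it forces the multidegree to be $(1,\dots,1)$, and (together with irreducibility of $f$) it rules out the variable-disjoint factorizations that complementary multidegrees would otherwise allow.
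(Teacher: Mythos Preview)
Your proof is correct and follows essentially the same route as the paper: reduce to the case where $f$ involves every variable, multihomogenize to obtain a divisor $D$ of multidegree $(1,\dots,1)$ in $(\mathbb{P}^1)^n$ with $D\cap\mathbb{A}^n=Z$, check that $D$ is irreducible, and then invoke Corollary~\ref{rat}. The only point of divergence is the irreducibility of $D$: the paper argues geometrically that any extra component of $D$ would have to lie in the boundary $(\mathbb{P}^1)^n\setminus\mathbb{A}^n$, hence coincide with one of the coordinate divisors $\{Y_i=0\}$, forcing $Y_i\mid F$ and contradicting the assumption that $X_i$ appears in $f$; you instead argue algebraically by factoring $F=GH$ into multihomogeneous pieces supported on complementary index sets and dehomogenizing to contradict the irreducibility of $f$. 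Both arguments are short and equivalent in strength.
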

 \begin{proof}
     Let us say that the $i$'th copy of $(\mathbb{P}^1)^n$ has homogeneous coordinates $[X_i:Y_i]$. We consider $\mathbb{A}^n$ as the open subset of $(\mathbb{P}^1)^n$ defined by $Y_1Y_2...Y_n\neq 0$. Let $f(X_1,...,X_n)=\sum_{I\subset[n]} c_I \underline{X}^I$. We assume without loss of generality that $f$ involves each variable $X_i$, otherwise $Z$ is pullback by a projection, of an irreducible hypersurface in $\mathbb{A}^{n-1}$ defined by a square-free polynomial, and we will be done by induction. In other words, $\cup_{I\subset[n],c_I\neq0} I=[n].$

     Let $F(\underline{X},\underline{Y})=\sum_I c_I \underline{X}^I \underline{Y}^{I^c}\in k[\underline{X},\underline{Y}].$ Here $I^c$ denotes the complement of the subset $I$ on $[n].$ Let $D\hookrightarrow (\mathbb{P}^1)^n$ be the zero-locus of $F$, so $D$ has multidegree $(1,1,...,1).$ Also, $D\cap \mathbb{A}^n=Z$. We know that $Z$ is irreducible. If $D$ is not irreducible, $D$ would contain an irreducible component of $(\mathbb{P}^1)^n\setminus\mathbb{A}^n$, so $Y_i|F$ for some $i$. So, $i\in I^c$ for all $I\subset[n]$ with $c_I\neq 0$. But this contradicts our assumption that $\cup_{I\subset[n],c_I\neq0} I=[n].$ So, $D$ is irreducible (hence $D=\Bar{Z}$). By Lemma \ref{normal} and Corollary \ref{rat}, $D$ is normal and has rational singularities. So, $Z=D\cap \mathbb{A}^n$ is normal and has rational singularities.
 \end{proof}
 \begin{corollary}\label{lct}
     Let $n,d \geq 1,$, $m_1, m_2,..., m_n$ be positive integers, $X=\prod_{i=1}^n\mathbb{P}^{m_i}$. Let $D$ be a prime divisor in $X$ of multidegree $(d,d,...,d).$ Then the log canonical threshold of $(X,D)$ satisfies $lct(X,D)\geq 1/d$. If $d\geq 2$, the inequality is strict.
 \end{corollary}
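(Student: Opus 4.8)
\textit{Proof proposal.}
The plan is to reduce to Theorem \ref{main} by rescaling. Observe that $\frac{1}{d}D$ is an effective $\mathbb{Q}$-divisor on $X$ of multidegree $\frac1d(d,d,\dots,d)=(1,1,\dots,1)$, so Theorem \ref{main} applies to it. Since $D$ is prime, its coefficient in $\frac1d D$ is $\frac1d$, hence $\floor{\tfrac1d D}=0$ when $d\geq 2$ and $\floor{\tfrac1d D}=D$ when $d=1$. I would split the argument according to these two cases.

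For $d=1$ the divisor $\floor{D}=D$ is irreducible, so Theorem \ref{main}(ii) gives that $(X,D)$ is plt, in particular log canonical; by definition of the log canonical threshold this yields $lct(X,D)\geq 1=1/d$, which is exactly the asserted bound (no strict inequality is claimed in this case). For $d\geq 2$ we have $\floor{\tfrac1d D}=0$, so Theorem \ref{main}(i) gives that $(X,\tfrac1d D)$ is klt. Because $X$ is smooth and $D$ is effective Cartier, klt-ness is preserved after adding a sufficiently small multiple of $D$ (the same openness used in Step 1 of the proof of Theorem \ref{main} via \cite[Corollary 2.35]{wilson2000birational}); hence $(X,tD)$ is klt, a fortiori log canonical, for all $t$ in an interval $[0,\tfrac1d+\varepsilon)$ with $\varepsilon>0$. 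Therefore $lct(X,D)\geq \tfrac1d+\varepsilon>\tfrac1d$, giving the strict inequality.

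I do not anticipate a substantial obstacle here; the proof is essentially a one-line deduction from the main theorem. The only point demanding a word of care is the passage from "$(X,\tfrac1d D)$ is klt" to "$lct(X,D)>\tfrac1d$": one must invoke the standard fact that at $t=lct(X,D)$ the pair $(X,tD)$ is log canonical but not klt (there is a divisor of discrepancy exactly $-1$), so that a genuinely klt pair $(X,tD)$ forces $t<lct(X,D)$. Equivalently one can argue directly via the openness of the klt locus as above. Finally one should record at the outset that $\tfrac1d D\geq 0$ and that its multidegree is $(1,\dots,1)$, so that Theorem \ref{main} is legitimately applicable.
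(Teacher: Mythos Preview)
Your proposal is correct and follows essentially the same approach as the paper: split into $d=1$ (use Theorem \ref{main}(ii) to get plt, hence lc) and $d\geq 2$ (use Theorem \ref{main}(i) to get that $(X,\tfrac1d D)$ is klt, hence $lct(X,D)>1/d$). The paper's version is terser---it simply asserts that klt at $\tfrac1d D$ gives $lct>1/d$ without spelling out the openness argument---but the content is identical.
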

 \begin{proof}
     If $d=1$ we have $(X,D)$ is plt by Theorem \ref{main}, hence lc. So the claim follows. For $d\geq 2,$ since $\floor{\frac{1}{d} D}=0$, $(X,\frac{1}{d} D)$ is klt by Theorem \ref{main}. So, $lct(X,D)>1/d$.
 \end{proof}
 \section{Acknowledgement}
 I thank Prof. János Kollár for giving valuable ideas.
\printbibliography
\vspace{40pt}
\begin{flushleft}
{\scshape Fine Hall, Princeton University, Princeton, NJ 700108, USA}.

{\fontfamily{cmtt}\selectfont
\textit{Email address: ss6663@princeton.edu} }
\end{flushleft}
\end{document}